\newcommand{\N}{\mathbb N}
\newcommand{\R}{\mathbb R}
\newcommand{\K}{\mathbb K}
\newcommand{\cB}{\mathcal B}
\newtheorem{Thm}{Theorem}[section]
\newtheorem{Cor}[Thm]{Corollary}
\newtheorem{Lem}[Thm]{Lemma}
\newtheorem{Prop}[Thm]{Proposition}
\newtheorem{Rmk}[Thm]{Remark}
\newtheorem{Def}[Thm]{Definition}
\begin{document}
\title{A Connection Between Mixing and Kac's Chaos}

\author{George Androulakis and Rade Musulin}
\address{Department of Mathematics, University of South Carolina, 
Columbia, SC 29208}
\email{giorgis@math.sc.edu, musulin@math.sc.edu}

\keywords{mixing, chaos, Kac's chaos.}

\subjclass{Primary: 37A25, Secondary: 81Q50, 28D05.}

\thanks{The article is part of the second author's Ph.D. thesis which is prepared at the University of South Carolina under the supervision of the first author.}

\maketitle

\begin{abstract}
The Boltzmann equation is an integro-differential equation which 
 describes the density function of the distribution of the velocities of the molecules of dilute monoatomic gases under the assumption that the energy 
 is only transferred via collisions between
 the molecules. In 1956 Kac  studied the Boltzmann equation and defined a property of the density function that he called  the ``Boltzmann property" 
 which describes the behavior of the density function at a given fixed time as the number of particles tends to infinity.  The Boltzmann property 
 has been studied extensively since then, and now it is simply called chaos, or Kac's chaos. 
 On the other hand, in ergodic theory, chaos usually refers to the mixing properties of a dynamical system
 as time tends to infinity. A relationship is derived between Kac's chaos and the notion of mixing.
\end{abstract}

\section{Several notions of chaos}

The notion of ``chaos'' in ergodic theory, has its origins in the works of Poincare at the end of the 19th century.
Its meaning is dynamical randomness  of physical quantities that evolve with time. 
The set up for measure theoretic dynamical systems consists of a probability  space $(\Omega, \Sigma , \mu)$ which is called the 
{\em phase space}, 
and either a measurable map $S: \Omega \to \Omega$ (in the case of discrete time dynamical systems),
or a family of measurable maps $S_t: \Omega \to \Omega$ for $t \geq 0$ (in the case of continuous time dynamical systems)
satisfying $S_t \circ S_s = S_{t+s}$ for all $s,t \in [0,\infty)$ (semigroup property). Such tuple $(\Omega, \Sigma , \mu , S)$ or
$(\Omega , \Sigma , \mu , (S_t)_{t \geq 0})$ is called a {\em measure theoretic dynamical system}, or simply a {\em dynamical system}.
In the case of discrete time 
dynamical systems, the composition of the map $S$ with itself $n$ many times,  (where $n$ is a non-negative integer), 
is usually denoted as $S^n$, (a notation which resembles powers of $S$), and plays the role of $S_n$ that appears in the above definition of
continuous time dynamical systems. For simplicity we only consider discrete time dynamical systems and we keep in mind that 
the ``exponent" $n$ that appears in the compositions $S^n$ represents time. 
The maps $S^n$ can be thought to act on $\Omega$,
(by the formula $\Omega \ni \omega \mapsto S^n \omega$), or on real valued functions on $\Omega$,
(where the action of $S^n$ on such function $f$ produces the real valued function 
$\Omega \ni \omega \mapsto f(S^n\omega)$),
or on probability measures on $\Sigma$ (where the action of $S^n$ on such measure $\nu$ produces the measure
$\Sigma \ni A \mapsto \nu (S^{-n} A)$).
Thus one can study orbits of points of $\Omega$, (i.e. the 
sequence of points $(S^n (\omega))_{ n \in \N \cup \{ 0 \} }$), or orbits of real valued functions on $\Omega$,
or orbits of probability measures on $\Sigma$. The property of chaos in ergodic theory 
refers to the randomness of these orbits and it is explicitly quantified and studied in the books of ergodic theory.
An excellent book on this subject is the book of Arnold and Avez, \cite{ArnoldAvez}, or the short survey of 
Sinai \cite{Sinai}. Two quantifications of the notions of chaos in the measure theoretic ergodic theory are 
the notions of the ``stationary limit" and ``mixing":

\begin{Def} \label{Def:ergodicity}
\begin{itemize}
\item[(i)] We say that a dynamical system $(\Omega,\Sigma,\mu,S)$ is asymptotically stationary  with stationary limit $\nu$ if 
$\nu(A)=\lim\limits_{k \rightarrow \infty}\mu(S^{-k}A)$ for each $A \in \Sigma$.
\item[(ii)] We say that a dynamical system $(\Omega,\Sigma,\mu,S)$ is mixing if
$$
\lim\limits_{k \rightarrow \infty} \left| \mu(S^{-k}A \cap B) - \mu(S^{-k}A)\mu(B)\right|=0
\quad \text{for all }A,B \in \Sigma .
$$
\end{itemize}
\end{Def}

Note that if the members of a sequence of probability measures are defined on a common $\sigma$-algebra $\Sigma$ and converge at every fixed element of $\Sigma$
then the limit is also a probability measure \cite[Theorem $4.6.3(i)$]{Bogachev}. Thus the limit $\nu$ that is obtained in Definition~\ref{Def:ergodicity}(i)
is a probability measure, since obviously, for every $k \in \N$, the map $\Sigma \ni A \mapsto \mu (S^{-k}A)$ defines a probability measure on $\Sigma$.
Obviously, if a dynamical system $(\Omega,\Sigma,\mu,S)$ is asymptotically stationary with stationary limit $\nu$
then $\nu$ is {\em invariant under $S$}, (or equivalently, {\em $S$ is $\nu$-measure preserving}), i.e. 
\begin{equation} \label{measurepreserving}
\nu (S^{-1}(A)) = \nu (A) \quad \text{for all }A \in \Sigma .
\end{equation}
It is also obvious that if the dynamical system $(\Omega,\Sigma,\mu,S)$ is asymptotically stationary with stationary limit $\nu$
then it is mixing if and only if 
$$
\lim\limits_{k \rightarrow \infty} \left| \mu(S^{-k}A \cap B) - \nu(A)\mu(B)\right|=0
\quad \text{for all }A,B \in \Sigma .
$$
In particular, if $(\Omega,\Sigma,\mu,S)$ is a dynamical system and the map $S$ is $\mu$-measure preserving, then 
$(\Omega,\Sigma,\mu,S)$ is mixing  if and only if
$$
\lim\limits_{k \rightarrow \infty} \left| \mu(S^{-k}A \cap B) - \mu(A)\mu(B)\right|=0
\quad \text{for all }A,B \in \Sigma .
$$

Another notion of chaos was created in 1956 by Kac \cite{Kac} while he was studying the Boltzmann equation. 
For a fixed positive integer $n$, the Boltzmann equation describes the density function of the distribution of the velocities
of $n$ many molecules of dilute monoatomic gases where the energy is assumed to be transferred only via 
elastic collisions between the molecules. While the Boltzmann equation is a non-linear equation,
Kac came up with a linear integro-differential equation that he called the ``master equation" \cite[Equation~(2.6)]{Kac}. 
Kac's master equation depends on a positive integer $n$ and its solution has $n+1$ real variables 
$(x_1, \ldots , x_n, t)$. The $n$ real variables $(x_1, \ldots , x_n)$ belong on the ``Kac's sphere" $\K^n$
which stands for the sphere in $\R^n$ centered at the origin whose radius is equal to $\sqrt{n}$, (i.e. $\K^n = \{ 
(x_1, \ldots , x_n) \in \R^n : x_1^2+ \cdots + x_n^2=n \} $),
while the extra variable $t$ represents time. Kac seeked solutions $f^{(n)}$ to his master equation that are symmetric in the variables
$(x_1, \ldots , x_n)$ for every $t \geq 0$, i.e.
\begin{equation} \label{symmetric}
f^{(n)}(x_1 \ldots , x_n ,t)= f^{(n)}(x_{\pi(1)}, \ldots , x_{\pi (n)},t), \quad \text{for every permutation }\pi \text{ of } \{ 1, \ldots , n \} .
\end{equation}

Moreover for any set $E$, a function $g:E^n \rightarrow \mathbb{C}$ is called symmetric if 
\begin{equation}\label{sym-function}
g^\pi(x_1,x_2,...,x_n)=g(x_1,x_2,...,x_n)
\end{equation} for all permutations $\pi$ of $\{1,\ldots,n\}$ and for all $(x_1,x_2,...,x_n) \in E^n$, where for each permutation $\pi$ of $\{1,\ldots,n\}$ we define $g^\pi:E^n \rightarrow \mathbb{C}$ by
\begin{equation}\label{function-perm}
g^\pi(x_1,x_2,...,x_n):=g(x_{\pi(1)},x_{\pi(2)},...,x_{\pi(n)}).
\end{equation}

We also assume that the solution $f^{(n)}$ to Kac's master equation is a {\em density function on $\mathbb{K}^n$} i.e. we assume that $f^{(n)}d\sigma^n$ is a probability measure on the Borel subsets of $\mathbb{K}^n$ where $\sigma^n$ denotes the normalized uniform measure on $\mathbb{K}^n$. For each $1 \leq m \leq n$, we can define a probability measure $(f^{(n)}d\sigma^n)_m$ on the Borel subsets of $\mathbb{R}^m$ by
\begin{eqnarray*}
(f^{(n)}d\sigma^n)_m(A)=\int_{P_m^{-1}(A)}f^{(n)}d\sigma^n
\end{eqnarray*}
where $P_m:\mathbb{K}^n \rightarrow \mathbb{R}^m$ is the canonical projection into the first $m$ copies of $\mathbb{R}$. It is clear that $(f^{(n)}d\sigma^n)_m$ is absolutely continuous with respect to the $m$-dimensional Lebesgue measure $\lambda^m$ on $\mathbb{R}^m$, and thus by the Radon-Nikodym Theorem there exists a function $f_m^{(n)} \in L^1(\mathbb{R}^m)$, called the $m^{\text{th}}$ marginal function of $f^{(n)}$, such that
\begin{eqnarray*}
\int_A f_m^{(n)} d\lambda^m = (f^{(n)}d\sigma^n)_m(A)=\int_{P_m^{-1}(A)}f^{(n)}d\sigma^n
\end{eqnarray*}
for every Borel subset $A$ of $\mathbb{R}^m$. Hence $f^{(n)}_m d\lambda^m$ is a Borel probability measure on $\mathbb{R}^m$ for every $1 \leq m \leq n$. Kac observed that if $f^{(n)}$ satisfies the master equation \cite[Equation~(2.6)]{Kac} then the first and
second marginals $f_1^{(n)}$ and $f_2^{(n)}$ satisfy \cite[Equation~(3.7)]{Kac} which reads
\begin{equation} \label{3.7}
\frac{\partial f_1^{(n)}(x,t)}{\partial t } = \frac{(n-1)\nu}{2 \pi n} \int_{-\sqrt{n-x^2}}^{\sqrt{n-x^2}}
 f_2^{(n)} (x \cos \theta + y \sin \theta , - x \sin \theta + y \cos \theta , t) - f_2^{(n)} (x,y,t)  d \theta d y .
\end{equation}

This equality is interpreted in the weak sense for density functions, i.e. each side is integrated against smooth functions with compact support having variables $(x,t) \in \mathbb{R} \times [0,\infty)$. In particular, the derivative is interpreted in the sense of distributions.

Kac observed that if the limits $f_1(\cdot , t):= \lim_{n \to \infty}\limits f^{(n)}_1(\cdot ,t)$ and 
$f_2(\cdot , \cdot , t):= \lim_{n \to \infty}\limits f^{(n)}_2(\cdot , \cdot ,t)$ 
exist in $L^1(\R)$ and $L^1(\R^2)$ respectively,  for all $t \geq 0$, 
(where the dots denote arbitrary real variables, and the $L^1$ spaces are taken with respect to the Lebesgue measure),
 and if for almost all $x,y \in \R$,
\begin{equation} \label{3.8}
f_2 (x,y,t) = f_1 (x,t) f_1 (y,t)  ,
\end{equation}
then 
\begin{equation}\label{3.5}
\frac{\partial f_1 (x,t)}{\partial t} = \frac{\nu}{2 \pi} \int_{-\infty}^\infty \int_0 ^{2\pi} f_1(x \cos \theta + y \sin \theta , t)
f_1(-x \sin \theta + y \cos \theta , t) - f_1(x,t) f_1(y,t)  d \theta d y 
\end{equation}
again, in the weak sense, i.e. the function $f_1$ is a solution to a simplified version of the non-linear Boltzmann equation.

Equation~(\ref{3.8})  motivated Kac to introduce the following definition:
For all $n \in \N$ let $f^{(n)}$ be a symmetric, (as in (\ref{symmetric})), probability  density function defined on $\K ^n$ (i.e. $f^{(n)}d\sigma^n$ is a Borel probability measure on $\mathbb{K}^n$). 
For $1 \leq k \leq n$ let $f^{(n)}_k$ denote the $k^{\text{th}}$ marginal of $f^{(n)}$. 
The sequence $(f^{(n)})$ is said to have the ``Boltzmann property'' if for all $k \in \N$ the  limit $\lim_{n \to \infty}\limits f^{(n)}_k$ exists in $L^1(\R^k )$, and
moreover, if $f_1$ denotes the $L^1(\R )$ limit of $f^{(n)}_1$,  then for all  $k \in \N$ and for almost all $x_1, \ldots, x_k \in \R$:
\begin{equation} \label{Boltzmann}
\lim_{n \to \infty} f^{(n)}_k(x_1, \ldots , x_k)= \prod_{i=1}^k f_1(x_i) .
\end{equation}

 Kac proved that if the initial value solution (at time $t=0$) to the master equation is symmetric (as in (\ref{symmetric})) and satisfies the Boltzmann property
then the solution to the master equation is symmetric and
satisfies the Boltzmann property for all times $t >0$. Since Kac's master equation is linear hence the existence of its solution is guaranteed by 
well known theory, Kac produced a method for constructing a solution to a  simplified version of the non-linear Boltzmann equation.

The ``Boltzmann property'' is commonly referred to as ``Kac's chaos'' and has attracted the interest of many people such as 
McKean \cite{McKean}, Johnson \cite{Johnson}, Tanaka \cite{Tanaka}, Ueno \cite{Ueno}, Gr\"{u}nbaum \cite{Grunbaum}, 
Murata \cite{Murata},      Graham and M\'{e}l\'{e}ard \cite{GrahamMeleard},
 Sznitman \cite{Sznitman84}, \cite{Sznitman06}, Mischler \cite{Mischler}, Carlen, Carvalho and Loss \cite{CarlenCarvalhoLoss},
 Michler and Mouhot \cite{MischlerMouhot}. 
These authors considered a  more general situation 
than a sequence $f^{(n)}$ of density functions defined on $\K ^n$. They considered a topological space $E$ and a symmetric probability measure 
$\mu_n$ on the Borel $\sigma$-algebra $\mathcal{B}(E^n)$ of the Cartesian product $E^n$ for each $n \in \mathbb{N}$. Here are the relevant definitions,
where for a topological space $E$, we denote by $C_b(E)$ the set of all continuous bounded functions on $E$:
\begin{Def} \label{Def:symmetric}
Let $E$ be a topological space, $n$ be a positive integer, $\mu_n$ be a probability measure on the Borel subsets of $E^n$.
Then $\mu_n$ is called symmetric if for any $\phi_1,\phi_2,...,\phi_n \in C_b(E)$,
\begin{eqnarray*}
\int_{E^n} \phi_1(x_1)\phi_2(x_2) \cdots \phi_n(x_n) d \mu_n =\int_{E^n} \phi_1(x_{\pi(1)})\phi_2(x_{\pi(2)}) \cdots \phi_n (x_{\pi (n)}) d \mu_n
\end{eqnarray*}
for any permutation $\pi$ of $\{ 1, \ldots , n \}$.
\end{Def}

Note that if $f^{(n)}$ is a symmetric (in the sense of (\ref{symmetric})) density function on the Kac's sphere $\mathbb{K}^n$, $\sigma^n$ denotes, as above, the uniform Borel probability measure on $\mathbb{K}^n$, and $\widetilde{\sigma^n}$ denotes the extension of $\sigma^n$ to the Borel subsets of $\mathbb{R}^n$ such that the support of $\widetilde{\sigma^n}$ is equal to $\mathbb{K}^n$ (this is possible since $\mathbb{K}^n$ is a Borel subset of $\mathbb{R}^n$), 
then $f^{(n)} d \widetilde{\sigma^n}$ is a symmetric probability measure on $\R^n$, (in the sense of Definition~\ref{Def:symmetric}).
Thus Definition~\ref{Def:symmetric} gives a more general notion of symmetry than that of Equation~(\ref{symmetric}) that was considered by Kac.
Now we define the Boltzmann property, or Kac's chaos, but following the above mentioned literature, we use a more descriptive terminology:
\begin{Def} \label{Def:chaos}
Let $E$ be a topological space, $\nu$ be a Borel probability measure on $E$, and for every $n \in \mathbb{N}$ let $\mu_n$ be a symmetric (as in Definition \ref{Def:symmetric}) Borel probability measure on $E^n$. 
We say that $(\mu_n)_{n=1}^\infty$ is $\nu$-chaotic if for all $k \geq 1$ and  $\phi_1,\phi_2,...,\phi_k \in C_b(E)$,
$$
\lim\limits_{n \rightarrow \infty} \int_{E^n} \phi_1 ( x_1) \phi_2 (x_2) \cdots  \phi_k (x_k) d\mu_n = \prod\limits_{j=1}^k \int_E \phi_j (x) d \nu (x).
$$
\end{Def}
Now let $f^{(n)}$ be a symmetric (in the sense of (\ref{symmetric})) density on $\K ^n$ 
 for all $n \in \N$ such that the sequence $(f^{(n)})_n$ has the Boltzmann property (as defined by Kac). 
 In particular, let  $f_1$ be the $L^1(\R)$ limit of the sequence $(f^{(n)}_1)_n$. Extend each $f^{(n)}$ to $\R^n$ (without changing its name) by setting it equal to 
zero on $\R ^n \backslash \K ^n$, and let $\widetilde{\sigma^n}$ be the Borel probability measure on $\R^n$ which is supported 
on $\K ^n$  and it is  uniform on $\K ^n$. Then the sequence of measures $(f^{(n)} d \widetilde{\sigma^n})_n $ defined on the Borel subsets 
of $\R ^n$ is $\nu$-chaotic
where $ d \nu = f_1 dx $,  and $dx$ is the Lebesgue measure on $\R$. Thus Definition~\ref{Def:chaos} gives a more general 
notion of chaoticity than the Boltzmann property defined by Kac.

In this article we provide a relationship between the notion of mixing that appears in ergodic theory and Kac's chaos. Our main result is Theorem~\ref{Thm:main} which asserts that given a dynamical system 
on a separable metric space $E$ which satisfies a property similar to the mixing property and it is asymptotically stationary with stationary 
limit $\nu$, one can construct a sequence of Borel probability measures $(\mu_n)_n$ on $(E^n)_n$ which is $\nu$-chaotic.

Two other related forms of chaoticity that exist in literature are the chaoticity in the sense of Boltzmann entropy and the 
chaoticity in the sense of Fisher information. These two notions were introduced by Carlen, Carvalho, Le Roux, Loss, and Villani \cite{CCLRLV}.
Hauray and Mischler has shown that chaoticity in the sense of Fisher information implies chaoticity in the sense of Boltzmann entropy, 
which in turn implies Kac's chaoticity \cite[Theorem~1.4]{HaurayMischler}. Carrapatoso \cite{Carrapatoso} has extended the results of \cite{HaurayMischler}
to probability measures with support on the Boltzmann spheres. 

Finally we would like to mention that there is a vast literature on the 
notion of ``quantum chaos", where notions of ergodic theory are extended to quantum physical models. Without attempting to give detailed references to quantum chaos, we refer the interested reader to the books \cite{Casati}, \cite{Giannoni}, and \cite{Heiss} where some of these notions are presented.

\section{The statement of the main result and some examples}

In this section we state the main result of the article and we give several examples of dynamical systems that satisfy its assumptions. Before stating the main result we introduce some notation. If $E$ is a topological space then $\cB (E)$ will denote the $\sigma$-algebra of the Borel subsets of $E$, and $M(E)$ will denote the 
set of probability measures on $\cB (E)$. Also $\Sigma_n$ will denote the set of permutations of $\{1,\ldots,n\}$ for each $n \in \N$.

We now present the main result of the article.

\begin{Thm} \label{Thm:main}
Let $E$ be a separable metric space, $\mu$ be a probability measure on $\cB(E)$, and  $S:E \rightarrow E$ be a Borel measurable map. 
Assume that
\begin{enumerate}
\item[1.] For every $A \in \cB(E)$,
$$
\sup_{i \in \mathbb{N}} |\mu(S^{-i}A \cap S^{-k}S^{-i}A) - \mu(S^{-i}A)\mu(S^{-k}S^{-i}A)| \xrightarrow[k \rightarrow \infty]{} 0,
$$
and
\item[2.] $(E,\cB(E),\mu,S)$ is asymptotically stationary with stationary limit $\nu$.
\end{enumerate}
For every $n \in \N$ define 
 $\mu_n:\cB(E^n) \rightarrow [0,1]$ by 
 $$
 \mu_n(A) = \dfrac{1}{n!}\sum\limits_{\sigma \in \Sigma_n}\mu\{x\in E:(S^{\sigma(1)}(x),...,S^{\sigma(n)}(x)) \in A\}.
 $$ 
Then $(\mu_n)_{n \in \mathbb{N}}$ is $\nu$-chaotic.
\end{Thm}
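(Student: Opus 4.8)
The plan is to evaluate the test integrals against $\mu_n$ explicitly and show they converge to the required products. Fix $k$ and $\phi_1,\dots,\phi_k\in C_b(E)$, and set $\phi_{k+1}=\cdots=\phi_n\equiv 1$. Since $\mu_n$ is the average over $\sigma\in\Sigma_n$ of the push-forward of $\mu$ under $x\mapsto(S^{\sigma(1)}x,\dots,S^{\sigma(n)}x)$, one obtains
$$
\int_{E^n}\phi_1(x_1)\cdots\phi_k(x_k)\,d\mu_n=\frac1{n!}\sum_{\sigma\in\Sigma_n}\int_E\prod_{j=1}^k\phi_j\bigl(S^{\sigma(j)}x\bigr)\,d\mu .
$$
The summand depends on $\sigma$ only through $(\sigma(1),\dots,\sigma(k))$, and exactly $(n-k)!$ permutations share a given value of this tuple; hence the right-hand side is the average of $I(a_1,\dots,a_k):=\int_E\prod_{j}\phi_j(S^{a_j}x)\,d\mu$ over all ordered $k$-tuples $(a_1,\dots,a_k)$ of distinct indices in $\{1,\dots,n\}$, of which there are $n(n-1)\cdots(n-k+1)$. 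I would also record at the outset that $\mu_n$ is symmetric in the sense of Definition~\ref{Def:symmetric}, which follows immediately from the averaging over $\Sigma_n$.

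Next I would isolate the ``good'' tuples. Call $(a_1,\dots,a_k)$ good if $\min_j a_j\ge M$ and $|a_j-a_\ell|\ge D$ whenever $j\ne\ell$. For fixed $M,D,k$ the number of tuples failing either requirement is $O(n^{k-1})$, while the total count is $\sim n^{k}$; since $|I|\le\prod_j\|\phi_j\|_\infty$, the bad tuples contribute $o(1)$ to the average. Everything therefore reduces to showing that $I(a_1,\dots,a_k)$ is within $\varepsilon$ of $\prod_{j}\int_E\phi_j\,d\nu$ once the tuple is good, with $M,D$ chosen large in terms of $\varepsilon$.

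The factorization of $I$ on good tuples rests on the two hypotheses, fed in through two reductions. First, the measures $A\mapsto\mu(S^{-a}A)$ converge setwise to $\nu$ (Assumption~2), and setwise convergence of probability measures forces convergence of integrals of bounded measurable functions, so $\int_E\phi(S^ax)\,d\mu\to\int_E\phi\,d\nu$ as $a\to\infty$; this handles each single factor. Second, I would upgrade Assumption~1 from indicators to $C_b(E)$ by approximating with simple functions, obtaining $\sup_i|\mathrm{Cov}_\mu(\phi\circ S^i,\phi\circ S^{i+k})|\to 0$ as $k\to\infty$. Polarizing the bilinear (but non-symmetric) form $Q^i_k(\phi,\psi):=\mathrm{Cov}_\mu(\phi\circ S^i,\psi\circ S^{i+k})$ then yields uniform-in-$i$ decay of the symmetric combination $Q^i_k(\phi,\psi)+Q^i_k(\psi,\phi)$. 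The decisive point is that the permutation-average symmetrizes exactly the covariances that arise: for each unordered pair of positions the two orderings contribute $Q^a_{b-a}(\phi,\psi)+Q^a_{b-a}(\psi,\phi)$ at the \emph{same} base time $a$ and gap $b-a$, which is precisely what polarization controls. For $k=2$ this already closes the argument, since regrouping the ordered pairs and inserting the two limits above shows that the second marginal of $\mu_n$ converges weakly to $\nu\otimes\nu$.

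To pass from $k=2$ to general $k$ I would invoke the classical equivalence, for symmetric measures, between $\nu$-chaos and weak convergence of the second marginals to $\nu\otimes\nu$ (see, e.g., \cite{Sznitman84}); the $k=2$ computation then suffices. The main obstacle is precisely the decoupling step together with its uniformity: Assumption~1 is stated only for a single set and on the diagonal, so the heart of the matter is to verify that the permutation-average never demands more than the symmetric two-point mixing, and that the estimates hold \emph{uniformly} in the base time $i$ (which is why Assumption~1 carries a supremum over $i$). If one prefers a self-contained route that avoids the chaos/second-marginal equivalence, the same ingredients can be organized as an induction on $k$ that peels off the most-shifted coordinate; there the genuine difficulty is decoupling a compound observable $\prod_{\ell<k}\phi_\ell\circ S^{b_\ell}$ from $\phi_k\circ S^{b_k}$, which is exactly where the uniform mixing of Assumption~1 must be applied to a ``past'' function rather than to a single shift.
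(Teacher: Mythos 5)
Your plan is correct, but it runs along a genuinely different track from the paper's. The paper never touches the second marginals directly: it invokes condition~2 of Theorem~\ref{Equiv} (convergence of the empirical measure $X_n$ to $\nu$ in $L^2(\mu_n)$ for each single test function $g$), reduces to the non-symmetrized measure $\mu_n(A)=\mu\{x:(S(x),\dots,S^n(x))\in A\}$ via Remark~\ref{sym}, and then expands $\int|(X_n-\nu)g|^2d\mu_n$ for $g=\chi_{E_1}$. Because only one test function appears, the covariances that arise are exactly the diagonal quantities $\mu(S^{-i}E_1\cap S^{-k}S^{-i}E_1)-\mu(S^{-i}E_1)\mu(S^{-k}S^{-i}E_1)$ of Assumption~1, applied verbatim; the passage to simple and then $C_b$ functions is by linearity and uniform approximation in the $L^2(\mu_n)$ norm. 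You instead verify condition~3 (the $k=2$ marginal condition) by averaging over ordered pairs of distinct exponents, which forces you to control the off-diagonal covariance of two \emph{different} functions $\phi_1,\phi_2$ from a hypothesis stated only on the diagonal with a single set; your observation that the permutation average produces exactly the symmetrized combination $Q^a_{b-a}(\phi_1,\phi_2)+Q^a_{b-a}(\phi_2,\phi_1)$, which polarization can reach, is the right fix and is the genuinely new ingredient relative to the paper. What the paper's route buys is that this polarization step is never needed; what your route buys is a more transparent combinatorial picture (good versus bad tuples) and a direct computation of the limiting second marginal.

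One step of yours deserves care: you propose to first upgrade the diagonal decay $\sup_i|Q^i_k(\phi,\phi)|\to0$ from indicators to simple functions ``by approximation'' and only then polarize. Expanding $Q^i_k\bigl(\sum_m c_m\chi_{A_m},\sum_m c_m\chi_{A_m}\bigr)$ already produces the symmetrized off-diagonal terms $Q^i_k(\chi_{A_m},\chi_{A_{m'}})+Q^i_k(\chi_{A_{m'}},\chi_{A_m})$, so as written the argument is circular. It closes once you take the canonical representation of a simple function with pairwise disjoint level sets: for disjoint $C,D$ one has $\chi_C+\chi_D=\chi_{C\cup D}$, hence
$$
Q^i_k(\chi_C,\chi_D)+Q^i_k(\chi_D,\chi_C)=Q^i_k(\chi_{C\cup D},\chi_{C\cup D})-Q^i_k(\chi_C,\chi_C)-Q^i_k(\chi_D,\chi_D),
$$
with every term on the right controlled uniformly in $i$ by Assumption~1; arbitrary pairs of Borel sets are then handled by splitting into $A\setminus B$, $B\setminus A$, $A\cap B$. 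With that disjointification inserted, the rest of your outline (Ces\`aro control of the $O(n^{k-1})$ bad tuples, Assumption~2 for the single-factor means via setwise convergence, and Sznitman's equivalence to pass from $k=2$ to all $k$) is sound.
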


Note that assumption 1 of the main result is related to the mixing property that was introduced in Definition~\ref{Def:ergodicity}(ii).
The differences between the two properties are: The limit in Definition~\ref{Def:ergodicity}(ii) is taken for any Borel sets $A$ and $B$ 
while in assumption~1, the sets $A$ and $B$ are equal and they belong to the $\sigma$-algebra $\{S^{-i}A: A \in \cB(E)\}$. In that sense, assumption 1 is weaker than 
Definition~\ref{Def:ergodicity}(ii). On the other hand, Definition~\ref{Def:ergodicity}(ii) lacks the uniformity which is 
manifested in assumption~1 by the presence of the supremum. In that sense, assumption 1 is stronger than 
Definition~\ref{Def:ergodicity}(ii).

One way to guarantee that a dynamical system satisfies assumption $1$ of Theorem~\ref{Thm:main} is by means of the next lemma.

\begin{Lem} \label{pi-system}
Let $(\Omega,\Sigma,\mu,S)$ be a dynamical system which is asymptotically stationary. Let $\Pi \subset \Sigma$ be a $\pi$-system such that $\sigma(\Pi)=\Sigma$ and

\begin{eqnarray}\label{mixing-pi-system}
\lim\limits_{k \rightarrow \infty}\sup\limits_{i \in \mathbb{N}} |\mu(S^{-i}A \cap S^{-k}S^{-i}B) - \mu(S^{-i}A)\mu(S^{-k}S^{-i}B)| = 0
\end{eqnarray}
for all $A, B \in \Pi$. Then (\ref{mixing-pi-system}) is satisfied for all $A,B\in\Sigma$ (hence assumption 1 of Theorem~\ref{Thm:main} is satisfied as well).
\end{Lem}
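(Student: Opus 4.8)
The plan is to fix one of the two sets, run the Dynkin $\pi$--$\lambda$ theorem in the remaining variable, and then repeat the argument in the other variable. For $A,B\in\Sigma$ and $i,k\in\N$ write
$$\Phi_{i,k}(A,B)=\mu\big(S^{-i}A\cap S^{-(i+k)}B\big)-\mu(S^{-i}A)\,\mu(S^{-(i+k)}B),$$
using $S^{-k}S^{-i}=S^{-(i+k)}$, so that (\ref{mixing-pi-system}) reads $\lim_{k}\sup_{i}|\Phi_{i,k}(A,B)|=0$. The quantity $\Phi_{i,k}$ is additive in each slot under disjoint decompositions (because $S^{-i}$ and $S^{-(i+k)}$ send disjoint sets to disjoint sets and $\mu$ is additive), it satisfies $\Phi_{i,k}(A^c,B)=-\Phi_{i,k}(A,B)=\Phi_{i,k}(A,B^c)$, and it obeys the crude bounds $|\Phi_{i,k}(A,B)|\le\mu(S^{-i}A)$ and $|\Phi_{i,k}(A,B)|\le\mu(S^{-(i+k)}B)$. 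These elementary identities are what will turn the two ``good set'' families below into $\lambda$-systems.

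The one non-formal ingredient is a uniformity statement. Asymptotic stationarity says precisely that the probability measures $\mu_m:=\mu\circ S^{-m}$ converge setwise to $\nu$ on $\Sigma$ as $m\to\infty$. Hence by the Vitali--Hahn--Saks theorem (see \cite{Bogachev}) the family $\{\mu_m:m\ge 0\}$ is uniformly countably additive; equivalently, for every decreasing sequence $C_N\downarrow\varnothing$ in $\Sigma$ one has $\sup_{m}\mu_m(C_N)\to 0$ as $N\to\infty$. I would isolate this as the engine of the proof.

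Next I would fix $F\in\Pi$ and consider $\mathcal G_F=\{A\in\Sigma:\lim_k\sup_i|\Phi_{i,k}(A,F)|=0\}$. It contains $\Omega$ (since $S^{-i}\Omega=\Omega$ gives $\Phi_{i,k}(\Omega,F)=0$) and is closed under complements by the identity above. For a disjoint union $A=\bigsqcup_j A_j$ with $A_j\in\mathcal G_F$ I would split $A=A^{(N)}\sqcup R_N$ with $A^{(N)}=\bigsqcup_{j\le N}A_j$ and tail $R_N=\bigsqcup_{j>N}A_j$; additivity (a finite sum of terms each tending to $0$) handles the finite part $A^{(N)}\in\mathcal G_F$, while the tail is controlled uniformly by $\sup_i|\Phi_{i,k}(R_N,F)|\le\sup_i\mu(S^{-i}R_N)=\sup_i\mu_i(R_N)$, which tends to $0$ as $N\to\infty$ because $R_N\downarrow\varnothing$ and the $\mu_i$ are uniformly countably additive. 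Taking $\limsup_k$ and then letting $N\to\infty$ shows $A\in\mathcal G_F$, so $\mathcal G_F$ is a $\lambda$-system. Since the hypothesis (\ref{mixing-pi-system}) on $\Pi$ gives $\Pi\subseteq\mathcal G_F$ and $\Pi$ is a $\pi$-system with $\sigma(\Pi)=\Sigma$, Dynkin's theorem yields $\mathcal G_F=\Sigma$.

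Finally I would fix an arbitrary $A\in\Sigma$ and run the same argument in the second variable: set $\mathcal H_A=\{B\in\Sigma:\lim_k\sup_i|\Phi_{i,k}(A,B)|=0\}$, which contains $\Pi$ by the previous step. Here the tail estimate uses the other crude bound $|\Phi_{i,k}(A,R_N)|\le\mu(S^{-(i+k)}R_N)=\mu_{i+k}(R_N)$, again uniformly small by uniform countable additivity (now over the indices $m=i+k$). Dynkin's theorem then gives $\mathcal H_A=\Sigma$, which is exactly (\ref{mixing-pi-system}) for all $A,B\in\Sigma$; specializing to $B=A$ recovers assumption 1 of Theorem~\ref{Thm:main}. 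The main obstacle is precisely the closure of $\mathcal G_F$ (and $\mathcal H_A$) under \emph{countable} disjoint unions: the interchange of the limit in $k$ with the infinite sum over the pieces cannot be justified pointwise in $i$, and it is only the Vitali--Hahn--Saks uniform countable additivity that tames the tail uniformly in $i$ (and in $i+k$).
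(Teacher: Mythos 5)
Your proof is correct, and its skeleton --- fix a set from $\Pi$, run Dynkin's $\pi$--$\lambda$ theorem in the other variable, then reverse the roles --- is exactly the paper's. Where you genuinely diverge is in the hard step, closure of the good-set class under the countable operation. The paper uses the ``closed under proper differences and increasing limits'' form of a $\lambda$-system and handles $B_n\uparrow B$ by an explicit $6\epsilon$-estimate: it inserts the stationary limit $\nu$ as a pivot and exploits that $\mu(S^{-(k+i)}C)\to\nu(C)$ uniformly in $i$ (since $k+i\ge k$) for $C=B$ and $C=B_{n_0}$, thereby making the tail $\mu\bigl(S^{-(k+i)}(B\setminus B_{n_0})\bigr)$ small. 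You use the complement/countable-disjoint-union form and instead invoke the Vitali--Hahn--Saks/Nikodym theorem to upgrade the setwise convergence $\mu\circ S^{-m}\to\nu$ to uniform countable additivity, which kills the tails $\sup_i\mu_i(R_N)$ and $\sup_{i,k}\mu_{i+k}(R_N)$ in one stroke; the sign-reversal and additivity identities for your $\Phi_{i,k}$ correctly supply the remaining $\lambda$-system axioms. The two tail controls are really the same fact: the paper's hand computation is precisely the elementary proof of the special case of uniform countable additivity that you need, so your route buys brevity and isolates the true engine of the argument (the paper already leans on the same circle of ideas when it cites Bogachev, Theorem 4.6.3, for the limit being a measure), at the cost of quoting a heavier classical theorem. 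Both arguments are complete.
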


\begin{proof}
The proof will be by way of the Dynkin $\pi-\lambda$ Theorem. Fix $A \in \Pi$, and define
$\Lambda_A:=\{B \in \Sigma: \lim\limits_{k \rightarrow \infty}\sup\limits_{i \in \mathbb{N}} 
|\mu(S^{-i}A \cap S^{-k}S^{-i}B) - \mu(S^{-i}A)\mu(S^{-k}S^{-i}B)| = 0\}$. It is clear that $\Omega \in \Lambda_A$. Now, assume $B_1,B_2 \in \Lambda_A$ such that $B_1 \subset B_2$. Then we have
\begin{eqnarray*}
0 &\leq& \sup\limits_{i \in \mathbb{N}} |\mu(S^{-i}A \cap S^{-k}S^{-i}(B_2\setminus B_1)) - \mu(S^{-i}A)\mu(S^{-k}S^{-i}(B_2 \setminus B_1))|\\
&=& \sup\limits_{i \in \mathbb{N}} |\mu(S^{-i}A \cap S^{-k}S^{-i}B_2) - \mu(S^{-i}A \cap S^{-k}S^{-i}B_1)\\
&-&\mu(S^{-i}A)\mu(S^{-k}S^{-i}B_2) + \mu(S^{-i}A)\mu(S^{-k}S^{-i}B_1)|\\
&\leq& \sup\limits_{i \in \mathbb{N}} |\mu(S^{-i}A \cap S^{-k}S^{-i}B_2) - \mu(S^{-i}A)\mu(S^{-k}S^{-i}B_2)|\\
&+&\sup\limits_{i \in \mathbb{N}} |\mu(S^{-i}A \cap S^{-k}S^{-i}B_1)-\mu(S^{-i}A)\mu(S^{-k}S^{-i}B_1)|
\end{eqnarray*}
Taking limits on both sides as $k\rightarrow \infty$, we get that $B_2 \setminus B_1 \in \Lambda_A$. 

Lastly, let $(B_n)_{n=1}^\infty \subset \Lambda_A$ be any monotone increasing sequence with limit $B \in \Sigma$. We need to show that $B \in \Lambda_A$. Let $\epsilon > 0$. Denote by $\nu$ the stationary limit of $(\Omega,\Sigma,\mu,S)$. We have that

\begin{eqnarray}
\nonumber \hskip.3in &&\sup\limits_{i \in \mathbb{N}} |\mu(S^{-i}A \cap S^{-k}S^{-i}B) - \mu(S^{-i}A)\mu(S^{-k}S^{-i}B)|\\
\label{9}&\leq& \sup\limits_{i \in \mathbb{N}} |\mu(S^{-i}A \cap S^{-k}S^{-i}B) - \mu(S^{-i}A)\nu(S^{-i}B)| + \sup\limits_{i \in \mathbb{N}}|\mu(S^{-i}A)||\nu(B)-\mu(S^{-k}S^{-i}B)|
\end{eqnarray}

By assumption, there exists a $k_0 \in \mathbb{N}$ such that $|\nu(B)-\mu(S^{-k}B)| < \epsilon$ for all $k \geq k_0$, and thus, the second term of (\ref{9}) can be made small. Notice that there exists an $n_0 \in \mathbb{N}$ such that $|\nu(B_n)-\nu(B)| < \epsilon$ for all $n \geq n_0$. Using this information, we focus on the first term of line (\ref{9}),

\begin{eqnarray}
\nonumber \hskip.3in &&\sup\limits_{i \in \mathbb{N}}|\mu(S^{-i}A \cap S^{-k}S^{-i}B) - \mu(S^{-i}A)\nu(S^{-i}B)|\\
\nonumber &=&\sup\limits_{i \in \mathbb{N}} |\mu(S^{-i}A \cap S^{-k}S^{-i}B_{n_0}) + \mu(S^{-i}A \cap S^{-k}S^{-i}(B\setminus B_{n_0})) - \mu(S^{-i}A)\nu(S^{-i}B)|\\
 \label{11}&\leq& \sup\limits_{i \in \mathbb{N}} |\mu(S^{-i}A \cap S^{-k}S^{-i}B_{n_0}) - \mu(S^{-i}A)\nu(S^{-i}B)|+\sup\limits_{i \in \mathbb{N}} |\mu(S^{-i}A \cap S^{-k}S^{-i}(B\setminus B_{n_0}))|
\end{eqnarray}

The first term of line (\ref{11}) can be estimated as

\begin{eqnarray*}
&&\sup\limits_{i \in \mathbb{N}} |\mu(S^{-i}A \cap S^{-k}S^{-i}B_{n_0}) - \mu(S^{-i}A)\nu(S^{-i}B)|\\ 
&\leq& \sup\limits_{i \in \mathbb{N}} |\mu(S^{-i}A \cap S^{-k}S^{-i}B_{n_0}) - \mu(S^{-i}A)\mu(S^{-k}S^{-i}B_{n_0})| + \sup\limits_{i \in \mathbb{N}}|\mu(S^{-i}A)||\mu(S^{-k}S^{-i}B_{n_0})-\nu(B_{n_0})|\\
&+& \sup\limits_{i \in \mathbb{N}} |\mu(S^{-i}A)||\nu(B_{n_0})-\nu(B)|
\end{eqnarray*}

There exists a $k_1 \in \mathbb{N}$ such that $|\mu(S^{-k}S^{-i}B_{n_0})-\nu(B_{n_0})| < \epsilon$ for all $k \geq k_1$, and there exists a $k_2 \in \mathbb{N}$ such that $\sup\limits_{i \in \mathbb{N}} |\mu(S^{-i}A \cap S^{-k}S^{-i}B_{n_0}) - \mu(S^{-i}A)\mu(S^{-k}S^{-i}B_{n_0})|<\epsilon$ for all $k \geq k_2$. Thus, for all $k \geq \max\{k_1,k_2\}$, the first term of line (\ref{11}) is small.

The second term of line (\ref{11}) can be estimated as

\begin{eqnarray*}
&&\sup\limits_{i \in \mathbb{N}} |\mu(S^{-i}A \cap S^{-k}S^{-i}(B\setminus B_{n_0}))| \leq \sup\limits_{i \in \mathbb{N}} |\mu(S^{-k}S^{-i}(B\setminus B_{n_0}))|\\
&\leq& \sup\limits_{i \in \mathbb{N}}|\mu(S^{-k}S^{-i}B)-\nu(B)|+|\nu(B)-\nu(B_{n_0})|+\sup\limits_{i \in \mathbb{N}}|\nu(B_{n_0})-\mu(S^{-k}S^{-i}B_{n_0})|
\end{eqnarray*}
which is small for all $k \geq \max\{k_0,k_1\}$. Hence, we have that

$$\sup\limits_{i \in \mathbb{N}} |\mu(S^{-i}A \cap S^{-k}S^{-i}B) - \mu(S^{-i}A)\mu(S^{-k}S^{-i}B)| < 6\epsilon \text{ for all } k \geq \max\{k_0,k_1,k_2\}.$$
Thus $B \in \Lambda_A$. By the Dynkin $\pi-\lambda$ Theorem, for all $A \in \Pi$ and all $B \in \Sigma$ we have $\lim\limits_{k \rightarrow \infty}\sup\limits_{i \in \mathbb{N}} |\mu(S^{-i}A \cap S^{-k}S^{-i}B) - \mu(S^{-i}A)\mu(S^{-k}S^{-i}B)| = 0$. The same argument can be turned around to show that for a fixed $B \in \Sigma$ (\ref{mixing-pi-system}) holds for all $A \in \Sigma$.
\end{proof}

Using Lemma~\ref{pi-system}, we obtain the following corollary of Theorem~\ref{Thm:main}.

\begin{Cor}
Let $E$ be a separable metric space, $\mu$ be a probability measure on $\cB(E)$, and  $S:E \rightarrow E$ be Borel measurable. Let $\Pi \subset \Sigma$ be a $\pi$-system such that $\sigma(\Pi)=\Sigma$.
Assume that
\begin{enumerate}
\item[1.] For every $A,B \in \Pi$,
$$
\sup_{i \in \mathbb{N}} |\mu(S^{-i}A \cap S^{-k}S^{-i}B) - \mu(S^{-i}A)\mu(S^{-k}S^{-i}B)| \xrightarrow[k \rightarrow \infty]{} 0,
$$
and
\item[2.] $(E,\cB(E),\mu,S)$ is asymptotically stationary with stationary limit $\nu$.
\end{enumerate}
For every $n \in \N$ define 
 $\mu_n:\cB(E^n) \rightarrow [0,1]$ by 
 $$\mu_n(A) = \dfrac{1}{n!}\sum\limits_{\sigma \in \Sigma_n}\mu\{x\in E:(S^{\sigma(1)}(x),...,S^{\sigma(n)}(x)) \in A\}.$$
Then $(\mu_n)_{n \in \mathbb{N}}$ is $\nu$-chaotic.
\end{Cor}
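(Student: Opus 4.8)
The plan is to derive the corollary directly from Theorem~\ref{Thm:main}, using Lemma~\ref{pi-system} to bridge the gap between the $\pi$-system hypothesis stated here and the hypothesis on all Borel sets that the theorem requires. First I would note that assumption~2 of the corollary is literally assumption~2 of Theorem~\ref{Thm:main}, and that it simultaneously furnishes the asymptotic stationarity needed to invoke Lemma~\ref{pi-system} for the dynamical system $(E,\cB(E),\mu,S)$, where $\Sigma=\cB(E)$.

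Next I would observe that assumption~1 of the corollary is exactly hypothesis (\ref{mixing-pi-system}) of Lemma~\ref{pi-system}, imposed on pairs $A,B\in\Pi$. Since $\Pi$ is a $\pi$-system with $\sigma(\Pi)=\cB(E)$ and the system is asymptotically stationary, the hypotheses of Lemma~\ref{pi-system} are met, and the lemma upgrades (\ref{mixing-pi-system}) to hold for all $A,B\in\cB(E)$.

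The final step is to specialize the conclusion of Lemma~\ref{pi-system} to the diagonal case $B=A$: the resulting statement is precisely assumption~1 of Theorem~\ref{Thm:main}. With both hypotheses of the theorem now verified, and with $\mu_n$ defined here by the identical formula, Theorem~\ref{Thm:main} yields that $(\mu_n)_{n\in\N}$ is $\nu$-chaotic, which is the desired conclusion.

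I do not expect any genuine obstacle: the substantive work has already been carried out in the proof of Lemma~\ref{pi-system}, and the corollary is a formal combination of that lemma with the main theorem. The only point requiring a moment's care is recognizing that assumption~1 of Theorem~\ref{Thm:main} is merely the diagonal instance $B=A$ of the conclusion supplied by Lemma~\ref{pi-system}; once the lemma has widened the $\pi$-system condition to all of $\cB(E)$, the hypotheses of the theorem follow without further computation.
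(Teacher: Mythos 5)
Your proposal is correct and matches the paper's intended argument exactly: the paper derives this corollary by invoking Lemma~\ref{pi-system} to extend condition (\ref{mixing-pi-system}) from $\Pi$ to all of $\cB(E)$ and then specializing to $B=A$ to satisfy assumption~1 of Theorem~\ref{Thm:main}.
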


The next two remarks give sufficient conditions for the assumptions of Theorem~\ref{Thm:main} to be met.

\begin{Rmk} \label{Rmk:simple}
Let $(\Omega , \Sigma, \mu , S)$ be a dynamical system which is mixing and $S$ is $\mu$-measure preserving. 
Then the assumptions 1 and 2 of Theorem~\ref{Thm:main} are satisfied for this dynamical system.
\end{Rmk}

Indeed, for every $A \in \Sigma$ we have 

\begin{eqnarray*}
\sup\limits_{i \in \N}|\mu(S^{-i}A \cap S^{-k}S^{-i}A) - \mu(S^{-i}A)\mu(S^{-k}S^{-i}A)|
&=& \sup\limits_{i \in \N}|\mu(S^{-i}(A \cap S^{-k}A))-\mu(A)\mu(S^{-k}A)|\\
&=&|\mu(A \cap S^{-k}A)-(\mu(A))^2|\xrightarrow[k \rightarrow \infty]{}0
\end{eqnarray*}
where the second equality is valid because $S$ is $\mu$-measure preserving and the limit is valid because the dynamical system is mixing. Thus assumption $1$ of Theorem~\ref{Thm:main} is satisfied. Also, $(\Omega,\Sigma,\mu,S)$ is asymptotically stationary with stationary limit $\mu$ since $S$ is $\mu$-measure preserving. Thus assumption $2$ of Theorem~\ref{Thm:main} is satisfied as well.

\begin{Rmk} \label{Rmk: pi-system}
Let $(\Omega,\Sigma,\mu,S)$ be a dynamical system and let $\Pi \subset \Sigma$ be a $\pi$-system such that $\sigma(\Pi)=\Sigma$.
\begin{enumerate}
\item[(i)] If $\lim\limits_{k \rightarrow \infty} |\mu(S^{-k}A \cap B)-\mu(S^{-k}A)\mu(B)|=0$ holds for every $A,B \in \Pi$, then it holds for every $A,B\in\Sigma$.
\item[(ii)] If $\mu(S^{-1}(A))=\mu(A)$ holds for all $A \in \Pi$ then it holds for all $A \in \Sigma$.
\end{enumerate}
\end{Rmk}
See Shalizi and Kontorovich \cite[Theorem $384$]{Shalizi} for the proof of part $(i)$. The proof of part $(ii)$ is very easy using Dynkin's $\pi-\lambda$ theorem.

We now present three examples of dynamical systems that satisfy the assumptions of Remark~\ref{Rmk:simple}. The first example is called 
the ``baker's map". The measure space for the baker's map is $([0,1]^2 := [0,1]\times[0,1],\cB ([0,1]^2),\mu)$ where $\mu$ is the Lebesgue measure restricted to
$\cB([0,1]^2)$. The map $S:[0,1]^2 \rightarrow [0,1]^2 $ of the dynamical system is defined by
$$
S(x,y)=
\begin{cases}
(2x,\frac{1}{2}y) & 0 \leq x < \frac{1}{2},0\leq y \leq 1\\
(2x-1,\frac{1}{2}y+\frac{1}{2}) & \frac{1}{2} \leq x \leq 1, 0 \leq y \leq 1
\end{cases} .
$$
Lasota and Mackey \cite[Example~4.3.1]{Lasota} prove that  the baker's map is mixing by verifying 
Definition~\ref{Def:ergodicity}(ii)  for all rectangles $A$, $B$ with sides parallel to $x$ and $y$ axes.  These rectangles form a $\pi$-system that generates
the $\sigma$-algebra $\cB ( [0,1]\times [0,1]) $. Thus by Remark~\ref{Rmk: pi-system}(i) the baker's map is mixing. Using the same $\pi$-system and Remark~\ref{Rmk: pi-system}(ii) it is easy to verify that the baker's map is measure preserving.

Another example of a dynamical system which satisfies the assumptions of Remark~\ref{Rmk:simple} is the Anosov map, (also called the cat map).
The measure space for the Anosov map is $([0,1)^2:= [0,1)\times[0,1),\cB( [0,1)^2 ),\mu)$ where $\mu$ is the Lebesgue measure restricted to 
$\cB ([0,1)^2 )$. 
The map $S:[0,1)^2 \rightarrow [0,1)^2 $ for the Anosov map is defined by 
$$
S(x,y)=(x+y,x+2y)\,  (\text{mod }1). 
$$
Lasota and Mackey prove that the cat map is mixing by using the Fibonacci sequence and Fourier transforms \cite[Example~4.4.3]{Lasota}. 
Arnold and Avez show that the Anosov Map is measure preserving \cite[Example~1.16]{ArnoldAvez}.

We now present a construction of an infinite product of probability measures satisfying the assumptions of Remark~\ref{Rmk:simple}. 
If $(\Omega_n, \Sigma_n , \mu_n)_{n \in \N}$ is a sequence of probability spaces then the Cartesian product 
$\prod_{n=1}^\infty  \Omega_n$ can be naturally equipped with an infinite product of these measures,
as defined by Kakutani \cite{Kakutani}. Denote this infinite product probability space by 
$(\prod_{n=1}^\infty \Omega_n , \prod_{n=1}^\infty \Sigma_n, \prod_{n=1}^\infty \mu_n )$. 
The $\sigma$-algebra $\prod_{n=1}^\infty \Sigma_n $ is generated by the $\pi$-system $\prod_{n=1}^{<\infty} \Sigma_n$ consisting of all sets of the form 
$\prod_{n=1}^\infty A_n$ where $A_n \in \Sigma_n$ for all $n \in \N$ and $A_n=\Omega_n$ for all but finitely many $n$'s. If $\Sigma_n=\Sigma$ for all $n \in \mathbb{N}$ then let $\Sigma^{<\infty}$ denote the $\pi$-system $\prod_{n=1}^{<\infty} \Sigma$. 
If $A= \prod_{n=1}^\infty A_n$ is such a set then we define $(\prod_{n=1}^\infty \mu_n) (A)= \prod_{n=1}^\infty(\mu_n (A_n))$. 
If  $\Omega_n=\Omega$ and $\Sigma_n=\Sigma$ for all $n \in \N$ then $\prod_{n=1}^\infty \Omega_n$ is denoted by $\Omega^\N$, 
and $\prod_{n=1}^\infty \Sigma_n$ is denoted by $\Sigma^\N$. If moreover 
$\mu_n = \mu$ for all $n \in \N$ then $\prod_{n=1}^\infty \mu_n$ is denoted by $\mu^\N$. 
Assume that for every $n \in \mathbb{N}$ $(\Omega , \Sigma , \mu_n , S)$ is a dynamical system (i.e. in general we may allow different measures to be considered on the same $\sigma$-algebra $\Sigma$). Define $S^\N : \Omega^\N \to \Omega^\N$ by 
$$
S^\N ((\omega_n)_{n \in \N}) = (S(\omega_{n+1}))_{n \in \N} .
$$
Then $S^\N$ is measurable i.e. $(\Omega^\N,\Sigma^\N,\prod_{n=1}^\infty \mu_n,S^\N)$ is a dynamical system. Indeed, it is enough and easy to check that $(S^\N)^{-1}(A) \in \Sigma^\N$ for every set $A$ in the $\pi$-system $\Sigma^{<\infty}$. Obviously, if $(\Omega,\Sigma,\mu,S)$ is a dynamical system and $S$ is $\mu$-measure preserving, then $S^\N$ is $\mu^\N$-measure preserving, (it is enough to be verified on sets of 
the $\pi$-system $\prod_{n=1}^{<\infty}\Sigma_n$, which is an easy task). Thus, if $(\Omega, \Sigma , \mu, S)$ denotes either
the baker's dynamical system, or the Anosov dynamical system defined above, then $S^\N $ is $\mu^N$- measure preserving. 
We claim that for any dynamical system $(\Omega , \Sigma , \mu , S)$, $(\Omega^\N, \Sigma^\N , \mu^\N , S^\N )$
is always mixing. By Remark~\ref{Rmk: pi-system}(i) this claim is enough and easy to be verified for sets $A$, $B$ in the $\pi$-system $\Sigma^{< \infty}$.
Indeed if $A= \prod_{n=1}^\infty A_n$, $B= \prod_{n=1}^\infty B_n$ where $A_n, B_n \in \Sigma$ for all $n$ and 
$A_n=B_n=\Omega$ for all $n > m$, then 
$$
\mu^\N ((S^\N)^{-k}A )= \prod_{n=1}^m \limits \mu (S^{-k} A_n ) , \quad \mu^{\N} (B)= \prod_{i=1}^m\limits  \mu (B_i) ,
$$
and 
$$
\mu^{\N}  ((S^{\N})^{-k} A \cap B ) = \prod_{i=1}^m\limits  \mu (B_i) \prod_{n=1}^m \limits \mu(S^{-k} A_n) \quad \text{for all }k >m. 
$$
Hence
$$
| \mu^\N ((S^\N)^{-k}A \cap B) - \mu^\N ((S^\N)^{-k} A) \mu^\N (B) |=0 \quad \text{for all }k>m.
$$
Thus if $(\Omega , \Sigma , \mu , S)$ is a dynamical system such that $S$ is $\mu$-measure preserving, then we obtain that
$(\Omega^\N , \Sigma^\N , \mu^\N , S^\N )$ is a dynamical system that satisfies the assumptions of Remark~\ref{Rmk:simple}. 

In all the examples that we have mentioned so far, the map of the dynamical system is measure preserving.  
Such maps are trivially asymptotically stationary with stationary limits being equal to the original measure. 
We now describe how infinite product probability measures can be used to give examples of dynamical systems that 
satisfy the assumptions of Theorem~\ref{Thm:main} without the map of the dynamical system being measure preserving. 
In this example the stationary limit of the dynamical system is different than the original measure.
Let $\Omega = [0,1]$ and $\Sigma = \cB ([0,1])$. For every $k \in \N$ define a density function 
$\phi_k:[0,1] \rightarrow \{ 0, 1,  2 \}$ by 
$$
\phi_k (x)=\chi_{[0,1-\frac{1}{2^{k-1}}]}(x)+2\chi_{(1-\frac{1}{2^{k-1}},1-\frac{1}{2^k}]}(x)
$$ 
and  the probability measure $\mu_k : \Sigma \rightarrow [0,1]$ by 
$$
\mu_k(A):=\int_A \phi_k (x) dx \quad \text{ for all }A \in \Sigma .
$$
Consider the probability space $(\Omega^\N,\Sigma^\N,\prod_{k=1}^\infty \mu_k)$ and in order to make the notation easier let $\mathbb{M}=\prod_{k=1}^\infty \mu_k$ and $\mathbb{L}=\lambda^\N$ where $\lambda$ is the Lebesgue measure on $[0,1]$. Consider a map $S:\Omega \rightarrow \Omega$ such that $S^\N$ is $\mathbb{L}$-measure preserving but not $\mathbb{M}$-measure preserving (this is valid for example when $S$ is the identity map). In order to make the notation easier let $\mathbb{S}=S^\N$. We claim that 
the dynamical system $(\Omega^\N , \Sigma^\N , \mathbb{M} , \mathbb{S})$ satisfies the assumptions of Theorem~\ref{Thm:main}. Indeed, in order to verify assumption 2 of Theorem~\ref{Thm:main}, we prove that $\mathbb{M}$ is asymptotically stationary with stationary limit 
equal to $\mathbb{L}$. 

Fix any $A \in \Sigma^\N$. Define for each $k \in \mathbb{N}$ the set $C_k:=[0,1]^k \times [0,1-\frac{1}{2^k}] \times [0,1-\frac{1}{2^{k+1}}] \times [0,1-\frac{1}{2^{k+2}}] \times \cdots$. Since $C_k = \bigcap\limits_{n=0}^\infty \left([0,1]^k \times [0,1-\frac{1}{2^k}] \times \cdots \times [0,1-\frac{1}{2^{k+n}}] \times [0,1]^\N\right)$, we have that $C_k \in \Sigma^\N$, and if $B \in \Sigma^\N$ with $B \subset C_k$ then $\mathbb{M}(B)=\mathbb{L}(B)$. Thus for any $A \in \Sigma^\N$ we have that

\begin{eqnarray*}
&&|\mathbb{M}(\mathbb{S}^{-k}A)-\mathbb{L}(A)|=|\mathbb{M}(\mathbb{S}^{-k}A)-\mathbb{L}(\mathbb{S}^{-k}B)|\\
&=&|\mathbb{M}((\mathbb{S}^{-k}A) \cap C_k)+\mathbb{M}((\mathbb{S}^{-k}A) \setminus C_k) - \mathbb{L}((\mathbb{S}^{-k}A) \cap C_k)-\mathbb{L}((\mathbb{S}^{-k}A) \setminus C_k)|\\
&=&|\mathbb{M}((\mathbb{S}^{-k}A) \setminus C_k)-\mathbb{L}((\mathbb{S}^{-k}A) \setminus C_k)| \leq 2\left(1-\prod\limits_{s=k}^\infty (1-\frac{1}{2^s})\right) \xrightarrow[k \rightarrow \infty]{} 0,
\end{eqnarray*}
where the last inequality is valid because $\mathbb{M}(C_k)=\mathbb{L}(C_k)=\prod\limits_{s=k}^\infty (1-\frac{1}{2^s})$, hence $\mathbb{M}([0,1]^\N \setminus C_k)=\mathbb{L}([0,1]^\N \setminus C_k) = 1-\prod\limits_{s=k}^\infty (1-\frac{1}{2^s})$. This verifies assumption $2$ of Theorem~\ref{Thm:main}.

Now, in order to verify assumption $1$ of Theorem~\ref{Thm:main} we use Lemma~\ref{pi-system}. Fix sets $A,B \in \prod\limits_{n=1}^{<\infty} \cB([0,1])$. Then $A=\prod\limits_{n=1}^\infty A_n$ where $A_n \in \cB([0,1])$ and there exists $N \in \N$ such that $A_n=[0,1]$ for all $n >N$. Then by the definition of the infinite product measure we have that for all $k > N$
$$
\mathbb{M}((\mathbb{S}^{-i}A) \cap (\mathbb{S}^{-k}\mathbb{S}^{-i}B)) = 
\mathbb{M}(\mathbb{S}^{-i}A) \mathbb{M}(\mathbb{S}^{-k}\mathbb{S}^{-i}B) \text{ for all } i \in \N .
$$
Hence (\ref{mixing-pi-system}) is valid, and the assumptions of Lemma~\ref{pi-system} are met. This means assumption $1$ of Theorem~\ref{Thm:main} is valid.

\section{Proof of the Main Result}

Many times when deciding whether a sequence of measures is $\nu$-chaotic, it is easier to show one of the equivalent formulations of chaos. Sznitman proves various equivalences to the definition of chaos which we list below.

\begin{Thm}\cite[Proposition $2.2$]{Sznitman06}\label{Equiv}
Let $E$ be a separable metric space, $(\mu_n)_{n=1}^\infty$ a sequence of symmetric probability measures on $E^n$ (as in Definition~\ref{Def:symmetric}), and $\nu$ be a probability measure on $E$. The following are equivalent:
\begin{enumerate}
\item[1.] The sequence $(\mu_n)_{n=1}^\infty$ is $\nu$-chaotic (as in Definition~\ref{Def:chaos}).
\item[2.] The function $X_n:E^n \rightarrow M(E)$ defined by $X_n:=\dfrac{1}{n}\sum\limits_{i=1}^n \delta_{x_i}$ (where $\delta_x$ stands for the Dirac measure at $x$) converges in law with respect to $\mu_n$ to the constant random variable $\nu$, i.e. for every $g \in C_b(E)$ we have that
$$\int_{E^n}|(X_n-\nu)g|^2 d\mu_n \xrightarrow[n \rightarrow \infty]{}0,$$
where $C_b(E)$ stands for the space of bounded continuous scalar valued functions on $E$.
\item[3.] The sequence $(\mu_n)_{n=1}^\infty$ satisfies Definition~\ref{Def:chaos} with $k=2$.
\end{enumerate}
\end{Thm}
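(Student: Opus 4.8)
The plan is to establish the cycle of implications $(1)\Rightarrow(3)\Rightarrow(2)\Rightarrow(1)$, using throughout the symmetry of each $\mu_n$ (Definition~\ref{Def:symmetric}), which lets me replace an integral of $\prod_j\phi_j(x_{i_j})$ over any tuple of \emph{distinct} indices by the same integral with $i_j=j$. The implication $(1)\Rightarrow(3)$ is immediate, as $(3)$ is just $(1)$ restricted to $k=2$. I record at once that $(3)$ in fact also yields the case $k=1$ of Definition~\ref{Def:chaos}: taking $\phi_2\equiv 1$ in the $k=2$ statement gives $\int_{E^n}\phi_1(x_1)\,d\mu_n\to\int_E\phi_1\,d\nu$ for every $\phi_1\in C_b(E)$.

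For $(3)\Rightarrow(2)$, fix $g\in C_b(E)$ and write $c=\int_E g\,d\nu$. Expanding $|w|^2=w\bar w$ gives
$$\int_{E^n}\Big|\frac1n\sum_{i=1}^n g(x_i)-c\Big|^2 d\mu_n=\frac{1}{n^2}\sum_{i,j=1}^n\int_{E^n}g(x_i)\overline{g(x_j)}\,d\mu_n-2\,\mathrm{Re}\Big(\bar c\,\frac1n\sum_{i=1}^n\int_{E^n}g(x_i)\,d\mu_n\Big)+|c|^2.$$
By symmetry of $\mu_n$ the double sum splits into $n$ diagonal terms, each equal to $\int_{E^n}|g(x_1)|^2\,d\mu_n$ and hence contributing $O(1/n)$, and $n(n-1)$ off-diagonal terms, each equal to $\int_{E^n}g(x_1)\overline{g(x_2)}\,d\mu_n$. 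The $k=2$ case of Definition~\ref{Def:chaos} applied to the pair $(g,\bar g)$ sends the off-diagonal average to $|c|^2$, while the $k=1$ case sends $\frac1n\sum_i\int_{E^n}g(x_i)\,d\mu_n$ to $c$; thus the whole expression tends to $|c|^2-2|c|^2+|c|^2=0$, which is exactly $(2)$.

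The main work is $(2)\Rightarrow(1)$. Fix $k$ and $\phi_1,\dots,\phi_k\in C_b(E)$ and set $Y_n^{(j)}:=\langle X_n,\phi_j\rangle=\frac1n\sum_{i=1}^n\phi_j(x_i)$. Expanding the product,
$$\int_{E^n}\prod_{j=1}^k Y_n^{(j)}\,d\mu_n=\frac{1}{n^k}\sum_{i_1,\dots,i_k=1}^n\int_{E^n}\prod_{j=1}^k\phi_j(x_{i_j})\,d\mu_n.$$
By symmetry, every tuple $(i_1,\dots,i_k)$ with pairwise distinct entries gives the same integral $\int_{E^n}\prod_j\phi_j(x_j)\,d\mu_n$; there are $n(n-1)\cdots(n-k+1)=n^k(1+o(1))$ such tuples, whereas the remaining $O(n^{k-1})$ tuples (those with a repeated index) contribute only $O(1/n)$ since each integrand is bounded by $\prod_j\|\phi_j\|_\infty$. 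Hence the left-hand side equals $\int_{E^n}\prod_j\phi_j(x_j)\,d\mu_n+o(1)$ up to the factor $n^k(1+o(1))/n^k$. On the other hand, hypothesis $(2)$ says $Y_n^{(j)}\to c_j:=\int_E\phi_j\,d\nu$ in $L^2(\mu_n)$, and since $|Y_n^{(j)}|\le\|\phi_j\|_\infty$ a telescoping estimate forces $\prod_j Y_n^{(j)}\to\prod_j c_j$ in $L^1(\mu_n)$, so the left-hand side converges to $\prod_j\int_E\phi_j\,d\nu$. Comparing the two evaluations yields $\int_{E^n}\prod_j\phi_j(x_j)\,d\mu_n\to\prod_j\int_E\phi_j\,d\nu$, i.e. $(1)$.

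I expect the only delicate points to be bookkeeping rather than ideas. In $(2)\Rightarrow(1)$ one must check that the off-diagonal combinatorics really produce the prefactor $n^k(1+o(1))$ with an $O(1/n)$ error, and one must justify passing from $L^2$-convergence of each $Y_n^{(j)}$ to $L^1$-convergence of their product; this last step is handled by the identity $\prod_j Y_n^{(j)}-\prod_j c_j=\sum_{l}\big(\prod_{j<l}c_j\big)(Y_n^{(l)}-c_l)\big(\prod_{j>l}Y_n^{(j)}\big)$ together with the uniform bounds $|Y_n^{(j)}|\le\|\phi_j\|_\infty$ and the inequality $\|\cdot\|_{L^1(\mu_n)}\le\|\cdot\|_{L^2(\mu_n)}$ valid for the probability measure $\mu_n$.
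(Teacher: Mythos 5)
The paper offers no proof of this statement---it is quoted from Sznitman \cite[Proposition~2.2]{Sznitman06}---so there is no in-paper argument to compare against; judged on its own, your proof is correct and is essentially the standard argument from Sznitman's notes. The cycle $(1)\Rightarrow(3)\Rightarrow(2)\Rightarrow(1)$ is the right structure: $(1)\Rightarrow(3)$ is trivial, the observation that $k=2$ with $\phi_2\equiv 1$ yields the $k=1$ case is exactly what is needed, the variance expansion for $(3)\Rightarrow(2)$ is sound, and the combinatorial count of distinct index tuples together with the telescoping identity and $\|\cdot\|_{L^1(\mu_n)}\le\|\cdot\|_{L^2(\mu_n)}$ correctly handles $(2)\Rightarrow(1)$. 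Two minor points you should make explicit in a final write-up: the replacement of $\int_{E^n} g(x_i)\overline{g(x_j)}\,d\mu_n$ by $\int_{E^n} g(x_1)\overline{g(x_2)}\,d\mu_n$ (and likewise for general distinct tuples) uses Definition~\ref{Def:symmetric} with the remaining test functions taken to be the constant function $1$ and a permutation carrying $(1,\dots,k)$ onto $(i_1,\dots,i_k)$, which requires those indices to be pairwise distinct and $n\ge k$; and in the last step of $(2)\Rightarrow(1)$ one should note that $I_n:=\int_{E^n}\prod_j\phi_j(x_j)\,d\mu_n$ is bounded by $\prod_j\|\phi_j\|_\infty$, so that dividing by the prefactor $n(n-1)\cdots(n-k+1)/n^k\to 1$ indeed gives $I_n\to\prod_j\int_E\phi_j\,d\nu$.
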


In order to construct examples of sequences of symmetric probability measures satisfying condition 2 of Theorem~\ref{Equiv} we will show that it is sufficient to construct a sequence of probability measures (not necessarily symmetric) which satisfy the same condition. Given a measurable space $(E,\Sigma)$, $n \in \mathbb{N}$, and a probability measure $\mu_n$ on the product space $(E^n,\Sigma^n)$, we define a symmetric probability measure $\mu_n^{\text{sym}}$ on $(E^n,\Sigma^n)$ in the following way: For each $\sigma \in \Sigma_n$ define $\Pi_\sigma:E^n \rightarrow E^n$ by $\Pi_\sigma(x_1,...,x_n)=(x_{\sigma(1)},...,x_{\sigma(n)})$, and define the probability measure $\mu_n^\sigma:\Sigma^n \rightarrow [0,1]$ by $\mu_n^\sigma(A)=\mu_n(\Pi_\sigma(A))$ for each $A \in \Sigma^n$. It is easy to verify that for any bounded and measurable function $f:E^n \rightarrow \mathbb{C}$ we have that 
$$\int_{E^n} f d\mu_n^\sigma=\int_{E^n} f^\sigma d\mu_n$$ where $f^\sigma$ is defined as in (\ref{function-perm}). The symmetric probability measure $\mu_n^{\text{sym}}:\Sigma^n\rightarrow [0,1]$ is then defined by $$\mu_n^{\text{sym}}(A)=\dfrac{1}{n!}\sum\limits_{\sigma \in \Sigma_n} \mu_n^\sigma(A) \text{ for each } A \in \Sigma^n.$$ For any bounded and measurable function $f:E^n \rightarrow \mathbb{C}$ which is symmetric (as in (\ref{sym-function})),
\begin{eqnarray}\label{int-equiv}
\int_{E^n} f d\mu_n^\text{sym} = \dfrac{1}{n!} \sum\limits_{\sigma \in \Sigma_n} \int_{E^n} f d\mu_n^\sigma = \dfrac{1}{n!}\sum\limits_{\sigma \in \Sigma_n} \int_{E^n} f^\sigma d\mu_n = \int_{E^n}f d\mu_n.
\end{eqnarray}
For a fixed $g \in C_b(E)$ and $\nu \in M(E)$ if we apply (\ref{int-equiv}) for $f:=|(X_n-\nu)(g)|^2:E^n \rightarrow \mathbb{C}$ (which is obviously bounded, measurable, and symmetric), we obtain the following.
\begin{Rmk}\label{sym}
Let $E$ be a separable metric space, $\mu_n$ be a probability measure on $\cB(E^n)$, and $\nu$ be a probability measure on $\cB(E)$. For any fixed $g \in C_b(E)$ we have that
\begin{eqnarray*}
\int_{E^n}|(X_n-\nu)g|^2 d\mu_n \xrightarrow[n \rightarrow \infty]{} 0 \,\,\, \text{ if and only if } \,\,\,\int_{E^n}|(X_n-\nu)g|^2 d\mu_n^\text{sym} \xrightarrow[n \rightarrow \infty]{} 0.
\end{eqnarray*}
\end{Rmk}

In order to prove Theorem~\ref{Thm:main}, we will also need the following.

\begin{Prop}\label{Prop:main}
Let $E$ be a separable metric space, $\mu$ be a probability measure on $\cB(E)$, and  $S:E \rightarrow E$ be a Borel measurable map. 
Assume that
\begin{enumerate}
\item[1.] For every $A \in \cB(E)$,
$$
\sup_{i \in \mathbb{N}} |\mu(S^{-i}A \cap S^{-k}S^{-i}A) - \mu(S^{-i}A)\mu(S^{-k}S^{-i}A)| \xrightarrow[k \rightarrow \infty]{} 0,
$$
and
\item[2.] $(E,\cB(E),\mu,S)$ is asymptotically stationary with stationary limit $\nu$.
\end{enumerate}
For every $n \in \N$ define 
 $\mu_n:\cB(E^n) \rightarrow [0,1]$ by 
 $$
 \mu_n(A) = \mu\{x\in E:(S(x),...,S^n(x)) \in A\}.
 $$ 
Then for every $g \in C_b(E)$,
$$\int_{E^n}|(X_n-\nu)g|^2 d\mu_n \xrightarrow[n \rightarrow \infty]{} 0. $$
\end{Prop}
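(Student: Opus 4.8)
The plan is to transport the integral from $E^n$ back to $E$ through the measure $\mu_n$, reduce the resulting second moment to a double sum of covariances, and then kill that double sum using assumption~1 together with a polarization argument. First I would observe that $\mu_n$ is the image of $\mu$ under $T_n\colon E\to E^n$, $T_n(x)=(S(x),\dots,S^n(x))$, and that $(X_n-\nu)g$ evaluated at $(x_1,\dots,x_n)$ equals $\frac1n\sum_{i=1}^n g(x_i)-\int_E g\,d\nu$. The change of variables formula then gives
$$
\int_{E^n}|(X_n-\nu)g|^2\,d\mu_n=\int_E\Big|\frac1n\sum_{i=1}^n g(S^i x)-\bar g\Big|^2\,d\mu(x),\qquad \bar g:=\int_E g\,d\nu.
$$
Splitting $g$ into real and imaginary parts (each of which keeps $(X_n-\nu)$ real-valued) reduces matters to real $g$. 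Writing $m_i:=\int_E g(S^ix)\,d\mu$ and $\gamma(i,j):=\int_E g(S^ix)g(S^jx)\,d\mu-m_im_j$ and expanding the square, the right-hand side equals
$$
\frac1{n^2}\sum_{i,j=1}^n\gamma(i,j)+\Big(\frac1n\sum_{i=1}^n(m_i-\bar g)\Big)^2 .
$$
Since assumption~2 states that $\mu\circ S^{-i}\to\nu$ setwise, and setwise convergence of probability measures forces convergence of integrals of bounded measurable functions, we get $m_i\to\bar g$; hence the mean term tends to $0$ by Ces\`aro averaging, and everything reduces to showing $\frac1{n^2}\sum_{i,j}\gamma(i,j)\to0$.

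The heart of the matter is the uniform decay $\sup_{i\in\N}|\gamma(i,i+k)|\xrightarrow[k\to\infty]{}0$. Assumption~1 is precisely this statement for $g=\chi_A$, a single indicator placed in both slots. The main obstacle is upgrading it to arbitrary bounded measurable $g$: expanding $g$ into a simple function produces cross terms $\int\chi_A(S^i)\chi_B(S^{i+k})\,d\mu$ with $A\neq B$, which assumption~1 does not directly govern. To handle these I would introduce the bounded bilinear form
$$
B_{i,k}(f,h):=\int_E f(S^ix)h(S^{i+k}x)\,d\mu-\int_E f(S^ix)\,d\mu\int_E h(S^{i+k}x)\,d\mu ,
$$
so that $\gamma(i,i+k)=B_{i,k}(g,g)$ and $|B_{i,k}(f,h)|\le2\|f\|_\infty\|h\|_\infty$ uniformly in $i,k$. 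The key device is that every simple function may be written over pairwise disjoint sets; for disjoint $A,B$ one has $\chi_A+\chi_B=\chi_{A\cup B}$, so the polarization identity
$$
B_{i,k}(\chi_A,\chi_B)+B_{i,k}(\chi_B,\chi_A)=B_{i,k}(\chi_{A\cup B},\chi_{A\cup B})-B_{i,k}(\chi_A,\chi_A)-B_{i,k}(\chi_B,\chi_B)
$$
expresses the symmetrized cross term entirely through single-set quantities, each of which tends to $0$ uniformly in $i$ by assumption~1. Summing over the finitely many disjoint pieces yields $\sup_i|B_{i,k}(s,s)|\to0$ for every simple $s$, and a uniform approximation $\|g-s\|_\infty<\epsilon$ together with the uniform bound on $B_{i,k}$ upgrades this to all bounded measurable $g$.

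Finally I would assemble the double sum via
$$
\frac1{n^2}\sum_{i,j=1}^n\gamma(i,j)=\frac1{n^2}\sum_{i=1}^n\gamma(i,i)+\frac2{n^2}\sum_{k=1}^{n-1}\sum_{i=1}^{n-k}\gamma(i,i+k).
$$
The diagonal contributes $O(1/n)$ since $|\gamma(i,i)|\le\|g\|_\infty^2$. For the off-diagonal part, given $\epsilon>0$ I would choose $K$ with $\sup_i|\gamma(i,i+k)|<\epsilon$ for all $k\ge K$; the terms with $k<K$ number $O(Kn)$ and are each bounded, contributing $O(K/n)$, while the terms with $k\ge K$ contribute at most $\epsilon$. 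Letting $n\to\infty$ and then $\epsilon\to0$ completes the argument. I expect the disjoint-set polarization to be the one genuinely non-routine step; the change of variables, the reduction to real $g$, the setwise-convergence input for $m_i\to\bar g$, and the final Ces\`aro-type bookkeeping are all standard.
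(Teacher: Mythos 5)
Your argument is correct, and at the level of a single indicator it coincides with the paper's: both expand the second moment into a covariance part killed by assumption~1 and a mean part killed by assumption~2, with the same Ces\`aro/tail-splitting bookkeeping at the end. Where you genuinely diverge is the passage from indicators to general $g$, which is the only non-routine step. The paper never confronts the cross terms at all: it exploits the linearity of $g \mapsto (X_n-\nu)g$ and applies Minkowski's inequality in $L^2(E^n, \mu_n)$, so that for a simple function $g=\sum_k \alpha_k \chi_{E_k}$ one gets $\bigl(\int_{E^n}|(X_n-\nu)g|^2\,d\mu_n\bigr)^{1/2} \le \sum_k |\alpha_k| \bigl(\int_{E^n}|(X_n-\nu)\chi_{E_k}|^2\,d\mu_n\bigr)^{1/2} \to 0$, with no disjointness needed and no bilinear form introduced; the extension to $g\in C_b(E)$ is then the same uniform-approximation step you use. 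Your route instead works at the level of the covariance form $B_{i,k}$ and uses the disjoint-support polarization identity $B_{i,k}(\chi_A,\chi_B)+B_{i,k}(\chi_B,\chi_A)=B_{i,k}(\chi_{A\cup B},\chi_{A\cup B})-B_{i,k}(\chi_A,\chi_A)-B_{i,k}(\chi_B,\chi_B)$, which is valid since $A\cup B\in\cB(E)$ and assumption~1 applies to it, and which suffices because in $B_{i,k}(s,s)$ the coefficients of the two cross terms agree. This is more work than the paper's Minkowski trick, but it buys a stronger intermediate fact: the uniform decay $\sup_{i\in\N}|\gamma(i,i+k)|\to 0$ for every bounded measurable real $g$, i.e.\ assumption~1 self-improves from the diagonal case $A=B$ to arbitrary pairs of functions --- a statement in the spirit of Lemma~2.2 of the paper but obtained without the Dynkin $\pi$--$\lambda$ machinery. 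All the surrounding steps (pushforward to $E$, reduction to real $g$, $m_i\to\bar g$ from setwise convergence, symmetry $\gamma(i,j)=\gamma(j,i)$ justifying the factor $2$) check out.
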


Notice that for the measure $\mu_n$ defined in Proposition~\ref{Prop:main}, and for each $A \in \cB(E^n)$,
\begin{eqnarray}\label{measure}
\nonumber \mu_n^{\text{sym}}(A)&=&\dfrac{1}{n!}\sum\limits_{\sigma \in \Sigma_n} \mu_n^\sigma(A)=\dfrac{1}{n!}\sum\limits_{\sigma \in \Sigma_n} \mu_n(\Pi_\sigma (A))\\
\nonumber&=&\dfrac{1}{n!}\sum\limits_{\sigma \in \Sigma_n} \mu\{x\in E:(S(x),...,S^n(x)) \in \Pi_\sigma(A)\}\\
\nonumber &=& \dfrac{1}{n!}\sum\limits_{\sigma \in \Sigma_n} \mu\{x\in E:(S^{\sigma^{-1}(1)}(x),...,S^{\sigma^{-1}(n)}(x)) \in A\}\\
&=& \dfrac{1}{n!}\sum\limits_{\sigma \in \Sigma_n} \mu\{x\in E:(S^{\sigma(1)}(x),...,S^{\sigma(n)}(x)) \in A\}.
\end{eqnarray} 
Now the proof of Theorem~\ref{Thm:main} follows immediately from Proposition~\ref{Prop:main}, Remark~\ref{sym}, the fact that $\mu_n^{\text{sym}}$ is a symmetric probability measure, Theorem~\ref{Equiv}, and (\ref{measure}). It remains to prove Proposition~\ref{Prop:main}.

\begin{proof}[Proof of Proposition~\ref{Prop:main}]

First, let $g:=\chi_{E_1}$ for some $E_1 \in \cB(E)$. Then, using that for $1 \leq i<j \leq n$, we have
 
 \begin{eqnarray*}
\int_{E^n} g(x_i)g(x_j) d\mu_n &=& \mu_n(E^{i-1} \times E_1 \times E^{j-i-1} \times
 E_1 \times E^{n-j}) = \mu(S^{-i}(E_1) \cap S^{-j}(E_1))
\end{eqnarray*}

we can write

\begin{eqnarray}
\nonumber\int_{E^n} \left|(X_n - \nu)(g)\right|^2 d\mu_n 
&=& \int_{E^n} \left|\dfrac{1}{n}\sum\limits_{i=1}^n g(x_i) - \int_E g d\nu\right|^2 d\mu_n \\
\nonumber&=&\dfrac{1}{n^2}\sum_{1\leq i,j \leq n} \int_{E^n} g(x_i)g(x_j)d\mu_n - \dfrac{2\nu(E_1)}{n} \sum\limits_{i=1}^n \int_{E^n} g(x_i) d\mu_n  + (\nu(E_1))^2\\
\nonumber&=&\dfrac{1}{n^2} \sum\limits_{1 \leq i,j \leq n} \mu(S^{-i}(E_1) \cap S^{-j}(E_1)) - \dfrac{2\nu(E_1)}{n} \sum\limits_{1 \leq i \leq n} \mu(S^{-i}(E_1))+(\nu(E_1))^2\\ 
&=&\dfrac{2}{n^2} \sum\limits_{1 \leq i < j \leq n} \mu(S^{-i}(E_1) \cap S^{-(j-i)}(S^{-i}(E_1))) \label{12} \\ &+& \dfrac{1}{n^2} \sum\limits_{1 \leq i \leq n} \mu(S^{-i}(E_1))
  - \dfrac{2 \nu(E_1)}{n} \sum\limits_{1 \leq i \leq n} \mu(S^{-i}(E_1)) + (\nu(E_1))^2 \nonumber
\end{eqnarray}

We have

\begin{eqnarray*}
\dfrac{1}{n^2}\sum\limits_{1 \leq i \leq n} \mu(S^{-i}(E_1)) &\leq& \dfrac{1}{n^2}\sum\limits_{1 \leq i \leq n} 1 \xrightarrow[n \rightarrow \infty]{} 0
\end{eqnarray*}

and by assumption $2$,

\begin{eqnarray*}
\dfrac{2 \nu(E_1)}{n} \sum\limits_{1 \leq i \leq n} \mu(S^{-i}(E_1)) 
\xrightarrow[n \rightarrow \infty]{} 2(\nu(E_1))^2
\end{eqnarray*}

Also, line (\ref{12}) can be written as

\begin{eqnarray}
\nonumber &&\dfrac{2}{n^2} \sum\limits_{1 \leq i < j \leq n} \mu(S^{-i}(E_1) \cap S^{-(j-i)}(S^{-i}(E_1)))\\
&=& \dfrac{2}{n^2} \sum\limits_{i=1}^{n-1}\sum\limits_{k=1}^{n-i} \left[\mu\left(S^{-i}(E_1) \cap S^{-k}(S^{-i}(E_1))\right) - \mu(S^{-i}(E_1))\mu(S^{-k}(S^{-i}(E_1)))\right]\label{13}\\
&+& \dfrac{2}{n^2} \sum\limits_{i=1}^{n-1} \sum\limits_{k=1}^{n-i} \mu(S^{-i}(E_1))\mu(S^{-k}(S^{-i}(E_1))).\label{14}
\end{eqnarray}

First, let us focus on line (\ref{13}). Let $\epsilon > 0$. By assumption $1$ of Theorem~\ref{Thm:main},
 there exists $K_0 \in \mathbb{N}$ such that if $k \geq K_0$ then $$|\mu(S^{-i
 }(E_1)\cap S^{-k}(S^{-i}(E_1)))- \mu(S^{-i}(E_1))\mu(S^{-k}(S^{-i}(E_1)))| <
  \epsilon$$ for every $i$. Thus for $n > K_0 + 1$ we have that line (\ref{13}) is less than or equal to

\begin{eqnarray*}
&& \dfrac{2}{n^2}\sum\limits_{k=1}^{K_0} \sum\limits_{i=1}^{n-k} \left|\mu\left(S^{-i}(E_1) \cap S^{-k}(S^{-i}(E_1))\right) - \mu(S^{-i}(E_1))\mu(S^{-k}(S^{-i}(E_1)))\right| + \dfrac{2}{n^2} \sum\limits_{k=K_0+1}^{n-1} \sum\limits_{i=1}^{n-k} \epsilon.
\end{eqnarray*}

Since the first double sum has at most $K_0^2+\frac{K_0 n}{2}$ terms, the second double sum has at most $\frac{(n-K_0)n}{2}$ terms, and $0 \leq \left|\mu\left(S^{-i}(E_1) \cap S^{-k}(S^{-i}(E_1))\right) - \mu(S^{-i}(E_1))\mu(S^{-k}(S^{-i}(E_1)))\right| \leq 2$, we have that line (\ref{13}) is less than or equal to

\begin{eqnarray*}
\dfrac{4(K_0^2+K_0 n/2)}{n^2} &+& \dfrac{\epsilon(n-K_0)n}{n^2} \xrightarrow[n \rightarrow \infty]{} \epsilon.
 \end{eqnarray*}

Now we will focus on line (\ref{14}). By assumption $2$, there exists $N_0 \in \mathbb{N}$ such that if 
$n \geq N_0$ then

\begin{eqnarray*}
|\mu(S^{-n}(E_1)-\nu(E_1)|<\epsilon .
\end{eqnarray*}
Hence, line (\ref{14}) is equal to

\begin{eqnarray*}
&&\dfrac{2}{n^2} \sum\limits_{i=1}^{N_0} \sum\limits_{k=1}^{n-i} \mu(S^{-i}(E_1))\mu(S^{-k}(S^{-i}(E_1)))+\dfrac{2}{n^2} \sum\limits_{i=N_0+1}^{n-1} \sum\limits_{k=1}^{n-i} \mu(S^{-i}(E_1))\mu(S^{-k}(S^{-i}(E_1)))
\end{eqnarray*}

The first double sum has at most $N_0^2+\frac{N_0 n}{2}$ terms, and thus,

\begin{eqnarray*}
\dfrac{2}{n^2} \sum\limits_{i=1}^{N_0} \sum\limits_{k=1}^{n-i} \mu(S^{-i}(E_1))\mu(S^{-k}(S^{-i}(E_1))) \leq \dfrac{2(N_0^2+N_0n/2)}{n^2}  \xrightarrow[n \rightarrow \infty]{} 0
\end{eqnarray*}

The second double sum can be rewritten as

\begin{eqnarray*}
&&\dfrac{2}{n^2} \sum\limits_{i=N_0+1}^{n-1} \sum\limits_{k=1}^{n-i} \mu(S^{-i}(E_1))\mu(S^{-k}(S^{-i}(E_1)))\\
&=& \dfrac{2}{n^2}\dfrac{(n-1-N_0)^2}{2}(\nu(E_1))^2\\
&+& \dfrac{2}{n^2} \sum\limits_{i=N_0+1}^{n-1} \sum\limits_{k=1}^{n-i} \left[\mu(S^{-i}E_1)[\mu(S^{-k}S^{-i}E_1)-\nu(E_1)]+[\mu(S^{-i}E_1)-\nu(E_1)]\nu(E_1)\right].
\end{eqnarray*}
We have
\begin{eqnarray*}
\dfrac{2}{n^2}\dfrac{(n-1-N_0)^2}{2}(\nu(E_1))^2 \xrightarrow[n \rightarrow \infty]{} (\nu(E_1))^2
\end{eqnarray*}
 and
 \begin{eqnarray*}
&&\dfrac{2}{n^2} \sum\limits_{i=N_0+1}^{n-1} \sum\limits_{k=1}^{n-i} \left[\mu(S^{-i}E_1)[\mu(S^{-k}S^{-i}E_1)-\nu(E_1)]+[\mu(S^{-i}E_1)-\nu(E_1)]\nu(E_1)\right]\\
&\leq& \dfrac{2}{n^2}\dfrac{(n-1)^2}{2}\epsilon + \dfrac{2}{n^2}\dfrac{(n-1)^2}{2}\epsilon \xrightarrow[n \rightarrow \infty]{} (\nu(E_1))^2 + 2\epsilon.
 \end{eqnarray*}
Hence line $(\ref{14})$ converges to $(\nu(E_1))^2$ as $n \rightarrow \infty$. This shows

\begin{eqnarray*}
\int_{E^n} \left|(X_n - \nu)(g)\right|^2 d\mu_n
 \xrightarrow[n \rightarrow \infty]{} 0
 \end{eqnarray*}
  for $g=\chi_{E_1}$.

Now, consider the simple function $g := \sum\limits_{k=1}^K \alpha_k \chi_{E_k}$. We have

\begin{eqnarray*}
&&\left(\int_{E^n} \left| (X_n-\nu)\left(g\right)\right
|^2d\mu_n\right)^{1/2}=\left(\int_{E^n} \left| (X_n-\nu)\left(\sum\limits_{k=1}^K \alpha_k \chi_{E_k}\right)\right
|^2d\mu_n\right)^{1/2}\\
&=&\left(\int_{E^n} \left| \sum\limits_{k=1}^K
 \alpha_k\left[(X_n-\nu)\chi_{E_k}\right]\right|^2d\mu_n\right)^{1/2} \leq
  \sum\limits_{k=1}^K |\alpha_k| \left(\int_{E^n} \left
  |(X_n-\nu)\chi_{E_k}\right|^2d\mu_n\right)^{1/2}
   \xrightarrow[n \rightarrow \infty]{} 0
   \end{eqnarray*}
since the limit is zero for each characteristic function and we have a finite sum.

Finally, let $g \in C_b(E)$ and $\epsilon > 0$. There exists a simple function $G$ such that $|g(x) - G(x)| < \epsilon$ for all $x \in E$. We have that

\begin{eqnarray*}
\left(\int_{E^n}\left|(X_n-\nu)(g-G)\right|^2d\mu_n\right)^{1/2} &\leq& \left(\int_{E^n} |X_n(g-G)|^2d\mu_n\right)^{1/2} + \left(\int_{E^n} |\nu(g-G)|^2d\mu_n\right)^{1/2}\\
&=&\left(\int_{E^n} \left|\dfrac{1}{n} \sum\limits_{i=1}^n(g-G)(x_i)\right|^2d\mu_n\right)^{1/2}+|\nu(g-G)|\\
&\leq& \dfrac{1}{n} \sum\limits_{i=1}^n \left(\int_{E^n} |(g-G)(x_i)|^2 d\mu_n\right)^{1/2} + \left|\int_E (g-G)d\nu\right|\\
&\leq& \sum\limits_{i=1}^n\dfrac{\epsilon}{n} + \epsilon = 2\epsilon
\end{eqnarray*}

and since $G$ is a simple function,

\begin{eqnarray*}
\left(\int_{E^n} |(X_n-\nu)G|^2d\mu_n\right)^{1/2} 
\xrightarrow[n \rightarrow \infty]{} 0.
\end{eqnarray*}

Therefore by the triangle inequality on $L^2(E^n,d\mu_n)$ norm we obtain since $\epsilon$ is arbitrary,

\begin{eqnarray*}
\left(\int_{E^n} |(X_n-\nu)g|^2d\mu_n\right)^{1/2}
 \xrightarrow[n\rightarrow \infty]{} 0.
 \end{eqnarray*}

\end{proof}

\end{document}